%\NeedsTeXFormat{LaTex2e}
%\documentclass{birkmult}
%\documentclass[11pt,a4paper]{amsart}
\documentclass[11pt]{amsart}
\usepackage{
amssymb,
amsmath}

\synctex=1
\usepackage[bookmarks,colorlinks,pagebackref]{hyperref}

\usepackage{hyperref}

\usepackage[latin1]{inputenc}

% - Palantino , Helvetica, Courier
\usepackage{mathpazo}
\usepackage[scaled=.95]{helvet}
\usepackage{courier}

\usepackage[all,cmtip,2cell]{xy}

\textwidth=16.00cm
\textheight=22.00cm
\topmargin=0.00cm
\oddsidemargin=0.00cm
\evensidemargin=0.00cm
\headheight=14.4pt
\headsep=1cm
\numberwithin{equation}{section}
\hyphenation{semi-stable}
\emergencystretch=10pt

%% theoremlike environments
\theoremstyle{plain} % style de mise en pages plain: normal (théorème)
\newtheorem{proposition}{Proposition}[section]  % Proposition numérotée par section

 % idem
\newtheorem{theorem}[proposition]{Theorem} % idem

\theoremstyle{definition} % style de mise en page définition

%\newtheorem{thdef}[proposition]{Theorem and Definition}

 % Remarque 

\newtheorem{example}[proposition]{Example} % Exemple numérotés par section

\newtheorem{question}[proposition]{Question}

\newtheorem{construction}[proposition]{Construction}
\newtheorem{notation and recalls}[proposition]{Notations and Recalls}

\newcommand\Supp{\operatorname{Supp}}

\newcommand\Ann{\operatorname{Ann}}

\newcommand\Hom{\operatorname{Hom}}

\newcommand\Ext{\operatorname{Ext}}
\newcommand\Rad{\operatorname{Rad}}

\newcommand\Coker{\operatorname{Coker}}
\newcommand\im{\operatorname{Im}}

\newcommand{\xx}{\underline x}

\newcommand{\TT}{\underline T}
\newcommand{\au}{\underline a}
\newcommand\grade{\operatorname{grade}}

\newcommand\cd{\operatorname{cd}}

\newcommand\height{\operatorname{height}}
\newcommand\Spec{\operatorname{Spec}}

\newcommand{\qism}{\stackrel{\sim}{\longrightarrow}}

\author[P.~Schenzel]{Peter Schenzel}
\title["Infinite" properties]
{"Infinite" properties of certain local cohomology modules of determinantal rings}
	
\address{Martin-Luther-Universit\"at Halle-Wittenberg,
Institut f\"ur Informatik, D --- 06 099 Halle (Saale), Germany}
\email{peter.schenzel@informatik.uni-halle.de}

\date{\today}
\dedicatory{Dedicated to Nguyen Tu Cuong on the occasion of his 70th birthday \\and for a 
		long friendship}

\begin{document}

\begin{abstract} 
For given integers $m,n \geq 2$ there are examples of ideals $I$ of complete 
determinantal local rings $(R,\mathfrak{m}), 
\dim R = m+n-1, \grade I = n-1,$ with the canonical module $\omega_R$ and the property 
that the socle dimensions of $H^{m+n-2}_I(\omega_R)$ and 
$H^m_{\mathfrak{m}}(H^{n-1}_I(\omega_R))$ are not 
finite. In the case of $m = n$, i.e. a Gorenstein ring, the socle dimensions 
provide further information about the $\tau$-numbers as studied in \cite{MS}.  
Moreover, the endomorphism ring of $H^{n-1}_I(\omega_R)$ is studied and shown 
to be an $R$-algebra of finite type but not finitely generated as $R$-module generalizing 
an example of \cite{Sp6}. 
\end{abstract}

\subjclass[2010]
{Primary: 13D45 ; Secondary: 13H10, 13J10}
\keywords{local cohomology, determinantal ring, ring of endomorphisms}

\maketitle

%\begin{center}
%	\textsl{Dedicated to Nguyen Tu Cuong on the occasion of his 70th birthday \\and for a 
%	long friendship}
%\end{center}

\section{Introduction}
Let $I$ denote an ideal of a local ring $(R,\mathfrak{m})$ with 
$\Bbbk = R/\mathfrak{m}$ its residue field. Let $M$ 
be a finitely generated $R$-module, and let $H^i_I(M), i \in \mathbb{Z},$ 
denote the local cohomology modules of $M$ with respect to $I$ (see 
\cite{Ga2} or \cite{BrS} for definitions). By $\grade (I,M)$ we denote the length 
of the largest regular sequence contained in $I$. In the case of $M= R$ ,
an equicharacteristic complete regular local ring,  the following is known:
\begin{itemize}
	\item[(a)] The Bass numbers $\dim_{\Bbbk} \Ext_R^i(\Bbbk,H^i_I(R))$ are finite 
	(see \cite{HS} and \cite{Lg1}). 
	\item[(b)] The natural homomorphism $R \to \Hom_R(H^c_I(R),H^c_I(R)), c = \grade I,$
	is an isomorphism if and only if $H^i_I(R) = 0$ for $i = d-1,d,$ and $c < d-1, d = \dim R$ 
	(see \cite{Sp6}).
\end{itemize}
The socle $\Hom_R(\Bbbk, H^i_I(M))$ is in general not finite dimensional as has been 
shown at first by R. Hartshorne (see \cite{Hr4}) by disproving a question 
by A. Grothendieck about cofiniteness of local cohomology, i.e. the finiteness of $\Hom_R(R/I, H^i_I(R))$. 
In their paper Huneke and Koh 
(see \cite{HK}) studied cofiniteness of various ideals. Moreover they showed  that for a field $\Bbbk$ of 
characteristic zero  and a polynomial ring $R = \Bbbk[x_{i,j}, 1 \leq 2, 1 \leq j \leq 3]$ the local cohomology 
$H^3_I(R)$ is not $I$-cofinite for $I$ the ideal generated by the maximal minors of the 
$2 \times 3$-matrix $(x_{i,j})$. A large class of 
local cohomology modules with infinite socle has been constructed by 
T. Marley and C. Vassilev (see \cite{MV}). 

The main topic of the present note is to discuss and relate the properties (a) and (b)
above in the non-smooth situation of certain determinantal rings. More precisely:
\begin{theorem} \label{rem-1}
	Let $R$ denote the completion of the coordinate ring of the vanishing ideal of the 
	$2\times2$ minors of an $m\times n$-matrix of variables and therefore $\dim R = m+n-1$. For the ideal $I$ generated by the first $n-1$ columns we have $\dim R/I = m$ and 
	$\grade I = n-1$. Let $\omega_R$ denote the canonical module of $R$.
	\begin{itemize}
		\item[(a)] $\Hom_R(\Bbbk, H^{m+n-2}_I(\omega_R))$ and 
		$\Hom_R(\Bbbk, H^{m+n-2}_I(R))$ are not finite dimensional, that is $I$ 
		is not cofinite and $H^i_I(\omega_R) = 0$ for $i \not= n-1,m+n-2$.
		\item[(b)] $\Hom_R(H^{n-1}_I(\omega_R),H^{n-1}_I(\omega_R))$ is an $R$-algebra of finite type, not finitely generated over $R$.
		\item[(c)] $\Hom_R(\Bbbk,H^m_{\mathfrak{m}}(H^{n-1}_I(\omega_R)))$ 
		and $\Hom_R(\Bbbk,H^0_{\mathfrak{m}}(H^{m+n-2}_I(\omega_R)))$ are 
		not finite dimensional.
	\end{itemize}
\end{theorem}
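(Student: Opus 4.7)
The plan is to exploit the presentation $R = S/I_2(X)$ with $S = \Bbbk[[x_{ij}]]$ a regular local ring of dimension $mn$, together with the crucial observation that the preimage $J := (x_{ij} : 1 \le i \le m,\ 1 \le j \le n-1)$ of $I$ in $S$ is generated by a regular sequence of length $(n-1)m$. Since the \v{C}ech complex of $I$ on any $R$-module $M$ coincides with $\check{C}^{\bullet}(J;S) \otimes_S M$ and $R\Gamma_J(S)$ has cohomology concentrated in degree $(n-1)m$, one obtains the basic identification
\[
H^i_I(M) \;\cong\; \Tor^S_{(n-1)m - i}\!\bigl(H^{(n-1)m}_J(S),\, M\bigr).
\]
Combined with the minimal free resolution of $R$ (respectively of $\omega_R$) over $S$, of length $\operatorname{pd}_S R = (m-1)(n-1)$, this reduces the theorem to the analysis of specific $\Tor$-modules; local duality in $S$ will be invoked to convert $\Tor$-statements into $\Ext$-statements when convenient.

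I would first prove (a). Vanishing for $i < n-1$ is the grade condition, and $H^{m+n-1}_I(\omega_R) = 0$ follows from the Hartshorne--Lichtenbaum theorem since $R$ is a complete local domain with $\dim R/I = m \ge 2$. For the intermediate range $n-1 < i < m+n-2$, the identification above rephrases the vanishing as $\Tor^S_j(H^{(n-1)m}_J(S),\,\omega_R) = 0$ for $j$ strictly between the two predicted indices $(m-1)(n-2)$ and $(m-1)(n-1)$; this I would read off from the Betti structure of the minimal free resolution of $\omega_R$ tensored against $H^{(n-1)m}_J(S)$, or equivalently by local duality in $S$ as an $\Ext$-vanishing statement whose support lies strictly inside $V(I)$. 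For the infinite-dimensional socle at $i = m+n-2$, I would describe $H^{m+n-2}_I(\omega_R) \cong \Tor^S_{(m-1)(n-2)}(H^{(n-1)m}_J(S),\,\omega_R)$ explicitly and extract from this presentation an infinite family of socle elements indexed by monomials in the surviving variables $x_{1n},\ldots,x_{mn}$; the same argument produces the infinite socle for $H^{m+n-2}_I(R)$.

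For (c), I would apply the convergent spectral sequence
\[
E_2^{p,q} \;=\; H^p_{\mathfrak{m}}\!\bigl(H^q_I(\omega_R)\bigr) \;\Longrightarrow\; H^{p+q}_{\mathfrak{m}}(\omega_R),
\]
whose abutment is the injective hull $E(\Bbbk)$ placed in total degree $m+n-1$ by Cohen--Macaulayness of $\omega_R$. By (a) only the two rows $q \in \{n-1,\,m+n-2\}$ contribute to $E_2$, and since $\Supp H^q_I \subset V(I)$ has dimension $m$, one has $E_2^{p,q} = 0$ for $p > m$. The only potentially non-trivial higher differential is $d_m \colon E_m^{p,\,m+n-2} \to E_m^{p+m,\,n-1}$; vanishing of $E_\infty$ off the total degree $m+n-1$ then forces $H^p_{\mathfrak{m}}(H^{n-1}_I(\omega_R)) = 0$ for $p < m$ and produces the injection
\[
0 \to H^0_{\mathfrak{m}}\!\bigl(H^{m+n-2}_I(\omega_R)\bigr) \hookrightarrow H^m_{\mathfrak{m}}\!\bigl(H^{n-1}_I(\omega_R)\bigr).
\]
The infinite socle of the source, provided by (a), transfers to the target, so both socles in (c) are infinite-dimensional.

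For (b) I would analyse $\operatorname{End}_R\!\bigl(H^{n-1}_I(\omega_R)\bigr)$ via the spectral-sequence technique of \cite{Sp6}, which relates the endomorphism ring of such a local-cohomology module to the natural map from $R$ and to $\Ext$-modules involving the further cohomologies of $R\Gamma_I(\omega_R)$. Using the two-step structure of $R\Gamma_I(\omega_R)$ dictated by (a), this presents $\operatorname{End}_R(H^{n-1}_I(\omega_R))$ as an extension of $R$ by an explicit $\Ext$-module. The finite-type property as an $R$-algebra comes from the finitely many algebra generators provided by the minimal-free-resolution data, while the failure to be finitely generated as an $R$-module traces back to the infinite socle established in (a) and (c). \emph{The main obstacle} will be carrying out the intermediate $\Tor$-vanishing in (a) --- controlling the middle of the minimal free resolution against the highly non-finitely-generated module $H^{(n-1)m}_J(S)$ --- and, in (b), simultaneously establishing the finite-type algebra property alongside the infinite $R$-module structure.
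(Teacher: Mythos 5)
Your framework is genuinely different from the paper's. The paper never works over the big ambient ring $P = \Bbbk[[\mathcal{X}]]$; instead it builds the small regular ring $S = \Bbbk[[\xx,\au]]$ of dimension $m+n-1$ together with the Segre-type embedding $\phi : R \to S$, $x_{ij}\mapsto x_ia_j$, and proves that each $\phi^\alpha : R/I^\alpha \to S/(\au)^\alpha S$ is injective with cokernel of finite length. From the resulting inverse system of short exact sequences the paper obtains \emph{simultaneously}: the vanishing $H^i_I(\omega_R)=0$ for $i\neq n-1, m+n-2$ (because $H^i_{\mathfrak m}(R/I^\alpha)$ is concentrated in degrees $1,m$), the identification $\Hom_R(H^{m+n-2}_I(\omega_R),E)\cong S/R$, and the identification $\operatorname{End}_R(H^{n-1}_I(\omega_R))\cong S$. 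All three parts of the theorem then drop out of the single fact that $S$ is not a finitely generated $R$-module.

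Your $\Tor$-reformulation is correct as stated: with $g=(n-1)m$ and $J$ the regular sequence of variables, $H^i_I(M)\cong \Tor^P_{g-i}(H^g_J(P),M)$, and the two surviving degrees are $j=(m-1)(n-1)$ and $j=(m-1)(n-2)$. But the intermediate vanishing is not something you can ``read off from the Betti structure.'' The minimal $P$-free resolution of $R$ (Eagon--Northcott when $\min(m,n)=2$, Lascoux/Akin--Buchsbaum--Weyman in general), and hence that of $\omega_R$, has nonzero Betti numbers in \emph{every} homological degree from $0$ to $(m-1)(n-1)$; tensoring with the highly non-finitely-generated module $H^g_J(P)$ gives a complex whose middle terms do not vanish, so the asserted $\Tor$-vanishing requires a genuine cancellation argument you have not supplied. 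The appeal to ``local duality in $P$'' is equally unclear: local duality over $P$ converts $H^\bullet_{\mathfrak m_P}$ into $\Ext$, not $H^\bullet_J$ for a non-maximal $J$. Similarly, the claim that the socle of $H^{m+n-2}_I(\omega_R)$ is visibly spanned by monomials in $x_{1n},\dots,x_{mn}$ needs justification: socle elements must be annihilated by all of $\mathfrak m$, not merely by $I$, and you give no reason why an infinite family of classes survives. Finally, part (b) is only a program: an ``extension of $R$ by an $\Ext$-module'' with generators ``provided by the minimal-free-resolution data'' does not yet establish finite type as an $R$-algebra, which in the paper comes for free from the explicit isomorphism $\operatorname{End}_R(H^{n-1}_I(\omega_R))\cong S$.

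On the positive side, your argument for (c) is a nice alternative and actually tighter than the paper's. The Grothendieck composition spectral sequence $E_2^{p,q}=H^p_{\mathfrak m}(H^q_I(\omega_R))\Rightarrow H^{p+q}_{\mathfrak m}(\omega_R)$, collapsed to the two rows $q\in\{n-1,m+n-2\}$ and using $E_2^{p,q}=0$ for $p>m$, forces the unique nontrivial higher differential $d_m : E_m^{0,m+n-2}\to E_m^{m,n-1}$ to be injective (its kernel would survive to $E_\infty$ in total degree $m+n-2\neq m+n-1$). Together with $\Supp H^{m+n-2}_I(\omega_R)\subseteq V(\mathfrak m)$, this yields an injection $H^{m+n-2}_I(\omega_R)\hookrightarrow H^m_{\mathfrak m}(H^{n-1}_I(\omega_R))$, so both socles in (c) are infinite once (a) is known. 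The paper instead deduces (c) from Theorem~\ref{thm-1}(b) and the non-finite generation of $S$; your route makes the comparison between the two socles more transparent, but it is conditional on (a), which is exactly the part your proposal leaves open.
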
 
  
In the case of $m=n$ the ring $R$ is a Gorenstein ring and $\omega_R \cong R$. For $m=n=2$ we  recover R. Harsthorne's example (see \cite{Hr4}) by completely different arguments. For an ideal $I$ the cohomological dimension is defined by $\cd I = \sup \{i \in \mathbb{N}| H^i_I(R) \not= 0\}$ (introduced by R. Hartshorne (see \cite{Hr2})).
Whence, $\cd I = m+n-2$ for the examples above. That is, possible non-cofinite 
ideals could occur in the highest non-vanishing cohomological level. In the case of a $d$-dimensional 
Gorenstein ring $(R,\mathfrak{m})$ the socle dimension $\Hom_R(\Bbbk, H^i_{\mathfrak{m}}(H^{d-j}_I(R)))$ 
are called $\tau$-numbers of type $(i,j)$ of $I$ (see \cite{MS}). In the case of a regular local ring containing 
a field they coincide with the Lyubeznik numbers introduced by Lyubeznik in \cite{Lg1} (see \cite[3.5]{MS} for the 
details). Therefore, the above results yield 
further information about these $\tau$-numbers. 

In Section 2 we prove the essentials about local cohomology for our purposes. Theorem 
\ref{thm-1} is the needed technical result for our constructions. Furthermore, we 
introduce the basics for the construction of our examples, based about some results 
of Segre varieties. In Section 3 we prove the statements of the examples 
introduced in Section 2. For the needed results  about Commutative Algebra we refer to Matsumura's 
book \cite{Mh}. For a few homological arguments resp. some facts about determinantal ideals we refer to \cite{SS} resp. to \cite{BrH}.

\section{Preliminaries and constructions}
At the beginning let us recall a few basics from local duality. 

\begin{notation and recalls} \label{not-1}
	(A) Let $(R,\mathfrak{m})$ denote a $d$-dimensional Cohen-Macaulay ring 
	with $E = E_R(R/\mathfrak{m})$ the injective hull of its residue field. Suppose that 
	$R$ possesses a normalized dualizing complex $D$ in the sense of 
	\cite[11.4.6]{SS}. Then $R$ admits a canonical module $\omega_R$ (see 
	\cite{Sp1}). Moreover, for an arbitrary $R$-module $X$ there is the following 
	Local Duality Theorem 
	\[
	H^i_{\mathfrak{m}}(X) \cong \Hom_R(\Ext_R^{d-i}(X,\omega_R),E)
	\]
	for all $i \geq 0$ (see e.g. \cite[10.4.3]{SS}).  For further properties of $\omega_R$ 
	we refer to \cite{SS} and \cite{Sp1}. \\
	(B) With the notation of (A) let $M$ denote a finitely generated $R$-module. Then 
	$\omega_R(M)$, the canonical module of $M$ (in the sense of \cite{Sp1}), exists 
	and there is an isomorphism $\Ext_R^c(M,\omega_R) \cong \omega_R(M)$, where 
	$c = \dim R - \dim_RM$ (see also 
	\cite{Sp1} for more details). \\
	(C) With the notation of (A) it follows that $\omega_R \qism D$, a minimal injective 
	resolution of $\omega_R$, is a normalized dualizing complex in the sense of 
	\cite[11.4.6]{SS}. Let $M$ denote a finitely generated $R$-module. Then there 
	is a natural morphism 
	\[
	M \to \Hom_R(\Hom_R(M,D),D)
	\]
	that is an isomorphism in cohomology. Let $c = d -\dim_RM$, then it induces a 
	natural homomorphism 
	\[
	M \to \Ext_R^c(\Ext_R^c(M,\omega_R),\omega_R) = \omega_R(\omega_R(M)).
	\]
	(D) It is known that for an $R$-module $M$ the module $\omega_R(\omega_R(M))$ is the 
	$S_2$-fication of $M$ (see \cite{Sp1}). Moreover, the natural homomorphism 
	$M \to \omega_R(\omega_R(M))$ induces a short exact sequence 
	\[
	0 \to M/u(M) \to \omega_R(\omega_R(M)) \to C \to 0,
	\]
	where $u(M)$ denotes the intersection of those primary components of $M$ that are 
	of highest dimension $\dim_RM$ and $\dim_R C \leq \dim_R M -2$ for $C$ the cokernel.
	Let $I \subset R$ an ideal of grade $c$ in the Cohen-Macaulay ring $R$. Then we get 
	an inverse system of homomorphisms $R/I^{\alpha} \to \Ext_R^c(\Ext_R^c(R/I^{\alpha},\omega_R), \omega_R)$.  By passing to the inverse limit 
	there is a homomorphism 
	\[
	\hat{R}^I \to B := \varprojlim 
	\Ext_R^c(\Ext_R^c(R/I^{\alpha},\omega_R), \omega_R) \cong \Ext_R^c(H^c_I(\omega_R),\omega_R),
	\]
	where $\hat{R}^I$ denotes the 
	$I$-adic completion of $I$. Note that the inverse maps are induced by 
	$R/I^{\alpha +1} \to R/I^{\alpha}$. By a slight modification  
	of the arguments of \cite[Section 3]{Sp6} it follows that $B$ is a commutative ring. \\
	(E) For an arbitrary $R$-module $X$ there is a natural homomorphism $R \to 
	\Hom_R(X,X), r \mapsto \phi(r)$, the multiplication by $r \in R$ on $X$. Its kernel is 
	$\Ann_RX$. In general the endomorphism ring $\Hom_R(X,X)$ is not commutative. 
\end{notation and recalls}

For the property of the ring $B$ in \ref{not-1} (D) we need a few more 
intrinsic properties.

\begin{theorem} \label{thm-1}
	Let $(R,\mathfrak{m},\Bbbk)$ be a $d$-dimensional complete local Cohen-Macaulay 
	ring. Let $\omega_R$ denote its canonical module. For an ideal $I \subset R$ with 
	$c = \grade I$ we have:
		\begin{itemize}
			\item[(a)] There are isomorphisms 
			\[
			\Hom_R(H^c_I(\omega_R),H^c_I(\omega_R)) \cong \Ext_R^c(H^c_I(\omega_R),\omega_R) \cong \Hom_R(H^{d-c}_{\mathfrak{m}}(H^c_I(\omega_R)),E).
			\]
			\item[(b)] The endomorphism ring $\Hom_R(H^c_I(\omega_R),H^c_I(\omega_R))$ 
			is a finitely generated $R$-module if and only if $\dim_{\Bbbk} \Hom_R(\Bbbk,H^{d-c}_{\mathfrak{m}}(H^c_I(\omega_R)))$ is finite.
			\item[(c)] The natural ring homomorphism $\rho :R \to  \Hom_R(H^c_I(\omega_R),H^c_I(\omega_R))$ is onto if and only if 
			$\dim_{\Bbbk} \Hom_R(\Bbbk,H^{d-c}_{\mathfrak{m}}(H^c_I(\omega_R))) = 1$
		\end{itemize}
\end{theorem}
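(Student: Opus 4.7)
For part (a), set $X := H^c_I(\omega_R)$. Since $R$ is Cohen-Macaulay and $\omega_R$ is maximal Cohen-Macaulay, $\grade(I,\omega_R) = c$ and hence $H^i_I(\omega_R) = 0$ for $i < c$, so $R\Gamma_I(\omega_R)$ has cohomology concentrated in degrees $\ge c$. Its canonical truncation yields a distinguished triangle
\[
	X[-c] \longrightarrow R\Gamma_I(\omega_R) \longrightarrow C \longrightarrow X[-c+1], \qquad C := \tau^{\ge c+1}R\Gamma_I(\omega_R).
\]
Because $X$ is $I$-torsion, $R\Hom_R(X,\omega_R) \qism R\Hom_R(X,R\Gamma_I(\omega_R))$. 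Applying $R\Hom_R(X,-)$ to the triangle and taking $H^c$, the hyper-$\Ext$ spectral sequence $E_2^{p,q} = \Ext_R^p(X,H^q(C))$ has $H^q(C) = 0$ for $q \le c$, so the only contributions to $\Ext_R^{c-1}(X,C)$ and $\Ext_R^c(X,C)$ would have $p < 0$ and thus vanish. The resulting collapse of the long exact sequence gives the first isomorphism $\Hom_R(X,X) \cong \Ext_R^c(X,\omega_R)$. For the second isomorphism, Local Duality from (A) with $i = d-c$ reads $H^{d-c}_{\mathfrak{m}}(X) \cong \Hom_R(\Ext_R^c(X,\omega_R),E)$; applying $\Hom_R(-,E)$ to this identifies $\Hom_R(H^{d-c}_{\mathfrak{m}}(X),E)$ with $\Ext_R^c(X,\omega_R)^{\vee\vee}$. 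The claim thus reduces to Matlis reflexivity of $\Ext_R^c(X,\omega_R)$, which I expect to be the main technical obstacle. The plan is to exploit the presentation from (D) of $\Ext_R^c(X,\omega_R)$ as the inverse limit of the finitely generated $S_2$-ifications $\omega_R(\omega_R(R/I^\alpha))$, together with arguments in the spirit of \cite{Sp6}.

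For part (b), part (a) gives an $R$-module isomorphism $A := \Hom_R(X,X) \cong \Hom_R(M,E)$ with $M := H^{d-c}_{\mathfrak{m}}(X)$ an $\mathfrak{m}$-torsion module. Over the complete local ring $R$ the Matlis functor exchanges the classes of finitely generated and Artinian modules, so $A$ is a finitely generated $R$-module if and only if $M$ is Artinian. For an $\mathfrak{m}$-torsion module $M$, being Artinian is in turn equivalent to $\dim_{\Bbbk}\Hom_R(\Bbbk,M) < \infty$: if the socle has dimension $n$, injectivity of $E^n$ extends the inclusion of the socle to an embedding $M \hookrightarrow E^n$, and the converse is immediate. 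This gives (b).

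For part (c), I first claim that $\rho$ is surjective if and only if $A$ is cyclic as $R$-module. Necessity is obvious; for sufficiency, assume $A \cong R/J$ as $R$-module and transfer the commutative (by (D)) $R$-algebra structure of $A$ to $R/J$, producing a multiplication whose unit $\tilde e \in R/J$ satisfies: its image in $A/\mathfrak{m}A \cong \Bbbk$ is a nonzero unit, so $\tilde e \notin \mathfrak{m}(R/J)$. Hence $\tilde e$ is a unit of the local ring $R/J$, generates it as $R$-module, and $\rho$ is forced to be onto. Now via the isomorphism $A \cong M^\vee$ from (a), the Matlis duality dictionary between finitely generated and Artinian modules over the complete local ring $R$ translates cyclicity of $A$, i.e.\ $A \cong R/J$, into the existence of an embedding $M \cong (0:_E J) \hookrightarrow E$; such an embedding exists precisely when $\dim_{\Bbbk}\Hom_R(\Bbbk,M) \le 1$. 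Since $\id_X \ne 0$ forces $A \ne 0$ and hence $M \ne 0$, the socle dimension is exactly $1$, establishing (c).
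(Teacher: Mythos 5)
Your argument for the first isomorphism in (a) via the truncation triangle $X[-c]\to R\Gamma_I(\omega_R)\to C\to$, the quasi-isomorphism $R\Hom_R(X,\omega_R)\simeq R\Hom_R(X,R\Gamma_I(\omega_R))$ for $I$-torsion $X$, and the spectral-sequence degree count is correct; the paper instead cites this from \cite[2.2(d)]{Sp8}. Your treatments of (b) and (c) are also correct, and in fact more transparent than the paper's: where the paper establishes that $B$ is $\mathfrak m$-adically complete and then invokes \cite[Theorem 8.4]{Mh}, you go directly through the Matlis dictionary ($M^\vee$ finitely generated $\Leftrightarrow$ $M$ Artinian $\Leftrightarrow$ finite socle for $\mathfrak m$-torsion $M$), and in (c) your observation that the unit of the nonzero commutative ring $A/\mathfrak m A$ must be nonzero makes precise the paper's unexplained assertion that $\rho\otimes 1_{\Bbbk}\neq 0$.

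The genuine gap is the second isomorphism in (a). You reduce it to Matlis reflexivity of $\Ext^c_R(X,\omega_R)$, correctly recognize that the usual consequence of local duality — $H^{d-c}_{\mathfrak m}(X)\cong\Hom_R(\Ext^c_R(X,\omega_R),E)$ — only yields the double Matlis dual, and then stop with a ``plan.'' Since $X=H^c_I(\omega_R)$ is typically not finitely generated, the reflexivity is not automatic and must be supplied. As submitted, this leaves the theorem unproved, because (b) and (c) both depend on exactly this identification $A\cong\Hom_R(H^{d-c}_{\mathfrak m}(X),E)$. A clean way to close the gap, and probably what the paper has in mind under ``Local Duality,'' is the derived/affine form of duality over a complete local ring with normalized dualizing complex $D$: for an arbitrary bounded complex $X$ one has $R\Hom_R(R\Gamma_{\mathfrak m}(X),E)\simeq R\Hom_R(X,D)$, which for $D\simeq\omega_R[d]$ gives $\Hom_R(H^{\ell}_{\mathfrak m}(X),E)\cong\Ext^{d-\ell}_R(X,\omega_R)$ for every $\ell$, with no reflexivity hypothesis. (This also proves, as a byproduct, the reflexivity you were after.) Your proposed route through the inverse-limit presentation $\Ext^c_R(X,\omega_R)\cong\varprojlim\omega_R(\omega_R(R/I^{\alpha}))$ could plausibly be made to work, but it is harder and was not carried out.
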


\begin{proof}
	(a): In order to prove the first isomorphism we refer to \cite[2.2 (d)]{Sp8}. In order to proof the 
	second one we have that 
	\[
	\Ext_R^c(H^c_I(\omega_R),\omega_R) \cong \Hom_R(H^{d-c}_{\mathfrak{m}}(H^c_I(\omega_R)),E)
	\]
	as follows by the Local Duality for the complete local Cohen-Macaulay ring $R$ 
	(see \ref{not-1} (A)). \\
	(b): By virtue of (a) we have isomorphisms 
	$B/\mathfrak{m}^{\alpha} \cong \Hom_R(\Hom_R(R/\mathfrak{m}^{\alpha},H^{d-c}_{\mathfrak{m}}(H^c_I(\omega_R)),E)$
	that form an inverse system. By passing to the inverse limit there are isomorphisms 
	\[
	\hat{B}^{\mathfrak{m}} = \varprojlim \Hom_R(\Hom_R(R/\mathfrak{m}^{\alpha},H^{d-c}_{\mathfrak{m}}(H^c_I(\omega_R)),E) 
	\cong \Hom_R(H^0_{\mathfrak{m}}(H^{d-c}_{\mathfrak{m}}(H^c_I(\omega_R))),E) \cong B
	\]
	because of $\Supp_R H^{d-c}_{\mathfrak{m}}(H^c_I(\omega_R)) \subseteq V(\mathfrak{m})$. 
	That is, $B$ is $\mathfrak{m}$-adic complete. Then $B$ is a finitely generated 
	$R$-module if and only if $\dim_{\Bbbk} B/\mathfrak{m}B < \infty$ (see e.g. \cite[Theorem 8.4]{Mh}). \\
	(c): Since $H^c_I(\omega_R) \not= 0$ 
	it follows that the natural homomorphism 
	\[
	\rho \otimes 1_{\Bbbk}: R\otimes_R \Bbbk \to 	\Hom_R(H^c_I(\omega_R),H^c_I(\omega_R)) \otimes_R\Bbbk
	\]
	is not zero. Then the statement follows by (b).  
\end{proof}

\begin{construction} \label{con-1}
	(A) Let $\Bbbk$ denote a field. Let $m, n \geq 2$ denote integers and let 
	$$
	\mathcal{X} = \begin{pmatrix} x_{11} & \ldots & x_{1n} \\
	\vdots & \ddots & \vdots\\ x_{m1} & \ldots & x_{mn} \end{pmatrix}
	$$ 
	denote a $m \times n$ matrix of 
	$mn$ variables over $\Bbbk$. Let $P = \Bbbk[[\mathcal{X}]]$ 
	denote the formal powers series ring in the $mn$ variables of $\mathcal{X}$. 
	We define $R = P/I_2$, where 
	$I_2 = I_2(\mathcal{X})$ denotes the ideal generated by the $2 \times 2$ 
	minors of $\mathcal{X}$. Then $R$ is a local Cohen-Macaulay domain of dimension $m+n-1$ 
	(see \cite{HR}).\\
	(B) We put $\xx_i = x_{1i}, \ldots, x_{mi}, i = 1,\ldots,n,$ the elements of the 
	$i$-th column of $\mathcal{X}$ and $\xx = \xx_n$ the elements of the last column. 
	We define $I = (\xx_1, \ldots,\xx_{n-1})R$ the ideal generated by the elements 
	of the first $n-1$ columns of $\mathcal{X}$. Then $\dim R/I = m$ and 
	$\height I = \grade I = n-1$.
	In the following we recall the form ring $G_I(R)$ of $I \subset R$. First we define 
	a $m \times n$-matrix 
	\[
	\mathcal{Y} = \begin{pmatrix} T_{11} & \ldots & T_{1,n-1}& x_{1n} \\
	\vdots & \ddots & \vdots & \vdots \\ T_{m1} & \ldots & T_{m,n-1} & x_{mn}\end{pmatrix}
	\]
	where $T_{ij}, 1 \leq i \leq m, 1 \leq j \leq n-1,$ are variables over $R$. 
	Let $J = I_2(\mathcal{Y})$ the ideal generated by the $2 \times 2$-minors of $\mathcal{Y}$.
	Let $\mathcal{T}$ denote the set of all variables $T_{i,j}, 1 \leq i \leq m, 1 \leq j \leq n-1$ of degree 1 and let $\mathcal{G} =(R/I)[\mathcal{T}]$. Then consider the 
	induced homogeneous map
	\[
	\Theta: \mathcal{G}/J \to Gr_I(R), \; T_{ij} +J \mapsto x_{ij} +I/I^2, \; 1 \leq i \leq m, 1 \leq j \leq n-1.
	\] 
	Note that it is well defined and surjective. Clearly, $\mathcal{G}/J$ 
	is a Cohen-Macaulay determinantal domain with $\dim \mathcal{G}/J = m+n-1$, i.e. 
	$J$ is a prime ideal. Moreover 
	$Gr_I(R)$ is a ring with $\dim Gr_I(R) = m+n-1$. Let $\mathcal{J}$ be the preimage of $\ker \Theta$ in $\mathcal{G}$. 
	Let $\mathcal{P} \subset \Spec \mathcal{G}$ denote the preimage of a highest dimensional prime ideal in $Gr_I(R)$. 
	Then $J \subseteq \mathcal{J} \subseteq \mathcal{P}$ and $\dim \mathcal{G}/J = 
	\dim \mathcal{G}/\mathcal{P}$ and therefore $J = \ker \Theta$. That is, $\Theta$ is an isomorphism and $Gr_I(R)$ is a domain. 
	\\
	(C) Next we introduce the ring $S = \Bbbk[[\xx,\au]]$, where $\xx = x_1,\ldots,x_m$ with $x_i= x_{in}, i = 1,\ldots,m$ and $\au = a_1,\ldots,a_{n-1}$ are sequences of 
	variables over $\Bbbk$.	Then $S$ is a regular local ring with $\au = a_1,\ldots,a_{n-1}$ a regular sequence. Therefore $Gr_{(\au)}(S) \cong (S/\au)[\TT]
	\cong \Bbbk[[\xx]][\TT]$ with 
	$\TT = T_1,\ldots,T_{n-1}$ variables of degree $1$. We define a ring homomorphism 
	\[
	\phi : R \to S, \; x_{ij} \mapsto a_i x_j, \; i = 1,\ldots, n-1, j = 1,\ldots,m.
	\]
	Note that $S$ is a domain and $\dim R = \dim S = m+n+1$. Therefore 
	$\phi$ is injective. The homomorphism $\phi$ extends to a homomorphism 
	\[
	\psi : Gr_I(R) \to Gr_{(\au)}(S), \; T_{ij} +J \mapsto x_jT_i, \; i = 1,\ldots, m, j = 1,\ldots,n-1.
	\]
	Since both $Gr_I(R)$ and $Gr_{(\au)}(S)$ are domains of the same dimension $\psi$ 
	is injective. 
\end{construction}

\section{Examples and remarks}
In the following we denote by $\omega_R$ the canonical module of a Cohen-Macaulay 
ring $R$. For basic definitions and properties of $\omega_R$ we refer to \cite[3.3]{BrH}, 
 the generalization in \cite{Sp1} and the summary in \ref{not-1}.  For a local ring $(R,\mathfrak{m})$ 
we denote by $E = E_R(R/\mathfrak{m})$ the injective hull of the residue field. 

\begin{example} \label{ex-1}
	We fix the notation of \ref{con-1}, that is,  $R$ 
	is the determinantal Cohen-Macaulay ring of dimension $m+n-1$ 
	with $I = (\xx_1, \ldots,\xx_{n-1})R$ the ideal generated by the elements 
	of the first $n-1$ columns of $\mathcal{X}$.  
	Then there are the following results:
	\begin{itemize}
		\item[(a)] $H^i_I(\omega_R) \not= 0, i = n-1, m+n-2,$ that is, $\cd(I) = m+n-2$.
		\item[(b)] $\dim_{\Bbbk} \Hom_R(\Bbbk, H^{m+n-2}_I(\omega_R))$ and 
		$\dim_{\Bbbk} \Hom_R(\Bbbk, H^{m+n-2}_I(R))$ are not finite, that is $I$ 
		is not cofinite.
		\item[(c)] $\Hom_R(H^{n-1}_I(\omega_R),H^{n-1}_I(\omega_R))$ is a commutative 
		Noetherian ring, not finitely generated over $R$.
		\item[(d)] $\dim_{\Bbbk} \Hom_R(\Bbbk,H^m_{\mathfrak{m}}(H^{n-1}_I(\omega_R)))$ 
		and $\dim_{\Bbbk} \Hom_R(\Bbbk,H^0_{\mathfrak{m}}(H^{m+n-2}_I(\omega_R)))$ are 
		not finite.
	\end{itemize} 
\end{example}

\begin{proof} 
	The ring homomorphism $\phi$ of \ref{con-1} (C) induces ring homomorphisms 
	\[
	\phi^{\alpha} : R/I^{\alpha} \to S/(\au)^{\alpha}S \; \mbox{ for all } \alpha \geq 1,
	\]
	which are well-defined.
	We claim that $\phi^{\alpha}$ is injective for all $\alpha \geq 1$. The 
	restriction $\psi^{\alpha}$ of the injection $\psi$ of \ref{con-1} (C) to degree $\alpha$ yields injective homomorphisms 
	$$
	{\psi}^{\alpha} : I^{\alpha}/I^{\alpha +1} \to (\au)^{\alpha}S /(\au)^{\alpha +1}S \; \mbox{ for all }\alpha \geq 0. 
	$$
	Let $D_{\alpha} := \Coker \psi^{\alpha}$ and let $\xx \cdot \au$ be the sequence of elements 
	$\{x_i a_j : i = 1,\ldots, n-1, j = 1,\ldots,m\}$. We define $C_{\alpha} 
	= \Coker \phi^{\alpha} = S/((\au)^{\alpha}S + S[[\xx\cdot\au]])$ 
	because of  $\im \phi = S[[\xx \cdot \au]]$. Now it follows that
	\[
	D_{\alpha} \cong ((\au)^{\alpha}S + S[[\xx \cdot \au]])/((\au)^{\alpha +1}S +S[[\xx \cdot \au]]).
	\]	
	as a consequence of the following commutative diagram with exact 
	rows and the snake lemma 
	\[
	\begin{array}{ccccccccc}
	0 & \to & I^{\alpha}/I^{\alpha +1} & \to & R/I^{\alpha +1}& \to &R/I^{\alpha} & \to & 0 \\
	  &     & \downarrow \psi^{\alpha} & & \downarrow \phi^{\alpha +1}& & \downarrow \phi^{\alpha}& & \\
	0 & \to & (\au)^{\alpha}S/(\au)^{\alpha +1}S & \to & S/(\au)^{\alpha +1}S & \to & S/(\au)^{\alpha}S & \to & 0. 
	\end{array}
	\]
	Since $\psi^{\alpha}$ is injective for all $\alpha \geq 0$ as the restriction of $\phi^{\alpha}$ it
	follows by induction that $\phi^{\alpha}$ is injective too. 
	Therefore $D_{\alpha}$ is spanned by $(\xx)^i (\au)^{\alpha}$ with $0 \leq i < \alpha$
	over $\Bbbk$. As an $R$-module it is finitely generated. For the radical 
	of the annihilator it follows that $\Rad_S \Ann_S D_{\alpha} = (\xx ,\au)$. 
	So that $C_{\alpha}, \alpha \geq 1,$ is an $R$-module of finite length.  
	There is the short exact sequence 
	\[
	0 \to R/I^{\alpha} \to S/(\au)^{\alpha}S \to C_{\alpha} \to 0 \; \mbox{ for all } \alpha \geq 1.
	\]
	Because of $C_{\alpha} \cong \oplus_{\beta = 1}^{\alpha-1} D_{\beta}$ it follows that 
	it is an $R$-module of finite length. Since $S/(\au)^{\alpha}S$ is a finitely generated 
	Cohen-Macaulay $R$-module it implies that $H^1_{\mathfrak{m}}(R/I^{\alpha}) \cong 
	C_{\alpha}$ and $H^i_{\mathfrak{m}}(R/I^{\alpha}) = 0$ for all $i \not= 1,m$. The 
	above short exact sequences form an inverse system of short exact sequences. 
	By passing to the inverse limit we get a short exact sequence 
	\[
	0 \to R \to S \to \varprojlim H^1_{\mathfrak{m}}(R/I^{\alpha}) \to 0.
	\]
	Note that the first family is given by surjective maps. By virtue of the Local Duality 
	Theorem for a Cohen-Macaulay ring (see \ref{not-1} (A)) there are isomorphisms 
	$$
	H^i_{\mathfrak{m}}(R/I^{\alpha}) \cong \Hom_R(\Ext_R^{m+n-1-i}(R/I^{\alpha},\omega_R),E)
	$$ 
	for all $\alpha \geq 1$ and by passing to the inverse limit $\varprojlim H^i_{\mathfrak{m}}(R/I^{\alpha}) \cong \Hom_R(H^{m+n-1-i}_I(\omega_R),E)$ for all $i$. 
	Therefore $H^i_I(\omega_R) = 0$ for $i \not= n-1,m+n-2$ and 
	$$
	\varprojlim H^1_{\mathfrak{m}}(R/I^{\alpha}) \cong \Hom_R(H^{m+n-2}_I(\omega_R),E) \not= 0.
	$$ 
	Because $S$ is not a finitely generated $R$-module $\Hom_R(H^{m+n-2}_I(\omega_R),E)$ is not finitely generated too. By the isomorphism
	\[
	\Bbbk \otimes_R \Hom_R(H^n_I(\omega_R),E) \cong \Hom_R(\Hom_R(\Bbbk, H^{m+n-2}_I(\omega_R)),E)
	\]
	and by Matlis Duality $\dim_{\Bbbk} \Hom_R(\Bbbk, H^{m+n-2}_I(\omega_R))$ 
	is not finite. 
	Moreover, $H^{m+n-2}_I(\omega_R) \cong H^{m+n-2}_I(R) \otimes_R\omega_R$ so that 
	$\Hom_R(H^{m+n-2}_I(\omega_R),E) \cong \Hom_R(\omega_R, \Hom_R(H^{m+n-2}_I(R),E))$. Because $\omega_R$ is a finitely generated $R$-module $\Hom_R(H^{m+n-2}_I(R),E)$ 
	is not finitely generated and -- as above -- $\dim_{\Bbbk} \Hom_R(\Bbbk, H^{m+n-2}_I(R))$ is not finite. 
	That is, (a) and (b) of \ref{ex-1} are proved.
	
	Now recall that $S/(\au)^{\alpha}S$ 
	is an $m$-dimensional Cohen-Macaulay ring, finitely generated over $R$ and $\dim_R C_{\alpha} = 0$. Therefore, 
	$$
	S/(\au)^{\alpha}S \cong \omega_R(\omega_R(R/I^{\alpha})) 
	\cong \Ext_R^{n-1}(\Ext_R^{n-1}(R/I^{\alpha}, \omega_R),\omega_R)
	$$
	(by view 
	of \ref{not-1} (D)). By passing to the inverse limit of the corresponding inverse and by  Theorem \ref{thm-1}  (a) it yields  that $S \cong \Hom_R(H^{n-1}_I(\omega_R),H^{n-1}_I(\omega_R))$, whence 
	(c) is shown.
	The first claim in (d) follows by \ref{thm-1} (b) since $S$ is not finitely generated 
	over $R$. The second one is clear by (b) since $\Supp_R H^{m+n-2}_I(R) = V(\mathfrak{m})$.	
\end{proof}

For the case of $m = n$ in \ref{ex-1} we get that $R$ is a Gorenstein ring and therefore 
$\omega_R \cong R$. For $m = n= 2$ we recover Hartshorne's example in this different context with additional properties. 
 
\begin{question} 
	Let $I \subset R$ denote an ideal of a local ring $(R,\mathfrak{m})$ with $c = \grade I.$ We do not know whether the endomorphism ring $\Hom_R(H^c_I(R),H^c_I(R))$ is in 
	general commutative and Noetherian. 
\end{question}

For further results about the endomorphism ring $\Hom_R(H^c_I(R),H^c_I(R))$ we refer also to \cite{Sp8}.

\bibliographystyle{siam}

\bibliography{hart-1}

\end{document}